\newtheorem{theorem}{Theorem}[section]
\newtheorem{remark}[theorem]{ Remark}
\newtheorem{corollary}[theorem]{Corollary}
\newtheorem{lemma}[theorem]{Lemma}
\newtheorem{definition}[theorem]{Definition}
\newcommand{\beq} {\begin{equation}}
\newcommand{\eeq} {\end{equation}}
\begin{document}
\title{Abundance of isomorphic and non isomorphic intermediate rings }
\author{Bedanta Bose \& Sudip Kumar Acharyya}

\address{Department of Mathematics\\ Swami Niswambalananda Girls' College\\ 115, B.P.M.B. Sarani, Bhadrakali\\ Hooghly - 712232\\ India}
\email{anabedanta@gmail.com}
\address{Department of Pure Mathematics\\ University of Calcutta\\ 35, Ballygunge Circular Road\\ kolkata-700019\\ India}
\email{sdpacharyya@gmail.com}

\subjclass[2010]{54C40, 46E25}
\keywords{Real Maximal Ideals, Hewitt Realcompactification, Types of Ultrafilters, Permutation}
\begin{abstract}
For a nonpseudocompact space $X$, the family $\sum(X)$ of all intermediate subrings of $C(X)$ which contain $C^*(X)$ contains at least $2^c$ many distinct rings. We show that if in addition $X$ is first countable and realcompact, then there are at least $2^c$ many $C$-type intermediate rings in $\sum(x)$ no two of which are pairwise isomorphic with the special case $X=\mathbb{N}$, it is shown that tere exists a family containing $c$-many pairwise isomorphic $C$-type intermediate rings in $\sum(\mathbb{N})$ 
\end{abstract}
\maketitle
\section{Introduction}
Suppose $C(X)$ is the ring of all real valued continuous functions defined on a Tychonoff topological space $X$. Let $C^*(X)$ be the aggregate of all real valued bounded continuous functions on $X$. Then $C^*(X)$ is a subring as well as sub lattice of $C(X)$. In this article any ring lying between $C^*(X)$ and $C(X)$ will be termed an intermediate ring. Suppose $\sum(X)$ is the collection of all such intermediate rings. There is a common property enjoyed by each ring $A(X)$ in the family $\sum(X)$, viz., that the set of all maximal ideals of $A(X)$ equipped with the hull-kernel topology, often called the structure space of $A(X)$, is topologically equivalent to the Stone-\v{C}ech compactification $\beta X$ of $X$. It is worth mentioning in this context that the last fact was proved first in the \cite{plank} and subsequently in \cite{ls} via a different technique involving local invertibility of functions. 
Intermediate rings have been studied by several authors, viz., \cite{ska},\cite{skd},\cite{hs},\cite{ls},\cite{lsw},\cite{JSack}.   It is well known [see the proof of Theorem 1.2 in \cite{DS}], though we shall give a short proof of the fact that if $X$ is not pseudocompact meaning that $C^*(X)\subsetneq C(X)$, then $\sum(X)$ contains at least $2^{\mathfrak{c}}$ many distinct rings. But it is natural to ask, how many amongst the distinct members of $\sum(x)$ are pairwise non isomorphic as rings. Our main motivation towards writing this article is to address this question. By imposing suitable condition on the topology on $X$, viz., that if $X$ is first countable and realcompact, we realize that there exist a family of $2^{c}$ many distinct intermediate rings  in $C(X)$ each of which is of $C$-type but no two of which are isomorphic [\cite{RC},Theorem 2]. We call  an intermediate ring $A(x) \in \sum (X)$, a $C$-type if $A(X)$ is isomorphic to a ring of the form $C(Y)$ for some Tychonoff space $Y$. We further realize that with the special choice $X=\mathbb{N}$, there exist $2^c$ many pairwise disjoint family of $C$-type intermediate subrings of $C(\mathbb{N}$ with the following property: each family contains $C$-many rings, any two of which are isomorphic (Theorem \ref{Ealpha}). To get access to the above mentioned families of pairwise non isomorphic (respectively pairwise isomorphic) intermediate rings, we have to focus our attention throughout this paper to a special kind of $C$-type intermediate rings. A bit technical elaboration may help us to speak on this point. A subset $Y$ of $X$ is called $C$-embedded in $X$ if each $f\in C(Y)$ has an extension to a $g\in C(X)$. The Hewitt realcompactification $\upsilon X$ of $X$ is the largest subspace of $\beta X$ containing $X$, to which $X$ is $C$-embedded. It is a standard result in the theory of rings of continuous functions that $X$ is pseudocompact meaning $C^*(X)=C(X)$ if and only if $\beta X=\upsilon X$ [\cite{lm}, \S 8A 4]. If $X$ is non pseudocompact, then $\beta X \setminus \upsilon X$ contains a homeomorphic copy of $\beta \mathbb{N}\setminus \mathbb{N}$ and hence $|\beta X\setminus \upsilon X|\geq 2^c$ [\cite{lm}, \S 9D(3)]. With each point $p\in \beta X \setminus \upsilon X$, we associate a $C$-type intermediate ring $C_p$ in such a manner that whenever $p$ and $q$ are distinct points in $\beta X\setminus \upsilon X$, the corresponding rings $C_p$ and $C_q$ are also distinct [Corollary \ref{cor}]. This demonstrates why $\sum (X)$ contains at least $2^c$ many different members with the additional hypothesis of first countability and realcompactness on $X$. It turns out that the rings $C_p$ and $C_q$ are isomorphic when and precisely when the points $p$ and $q$ in $\beta X$ could be exchanged by a homeomorphism on $\beta X$ onto itself [Theorem \ref{C_p}]. This opens the flood gate of non isomorphic intermediate rings of the form $C_p$ [Theorem \ref{conclusion}].

\section{Isomorphism between rings $C_p$}
In this section we establish our principal technical results [Theorem \ref{Ealpha}] and [Theorem \ref{conclusion}] which we deduce as a consequence of [Theorem \ref{C_p}]. Each $f\in C(X)$ has an extension to a uniquely determined continuous map $f^*:\beta X\rightarrow \mathbb{R}^*=\mathbb{R}\cup\{\infty\}$ which is called the Stone extension of $f$ [\cite{lm}, 7.5]. Then $\upsilon X=\{p\in \beta X: f^*(p)\in \mathbb{R}~ \mbox{for each}~ f\in C(X)\}$.
\begin{definition}
For any subset $T$ of $\beta X$, set $C_{T}=\{f\in C(X):f^*(p)\in \mathbb{R}~ \mbox{for each} ~p\in T\}$
\end{definition}

It is easy to prove that $C_{T}$ is an intermediate ring. In particular for any point $p\in \beta X$, $C_{\{p\}}\equiv C_p$ is an intermediate ring.
\begin{definition}
For each subset $A$ of $C(X)$, set $\upsilon_A X=\{p\in \beta X~|~f^*(p)\in \mathbb{R}~\mbox{for each}~f\in A\}$. Then $\upsilon_A X$ is a real compact space lying between $X$ and $\beta X$ [\cite{lm}, \S 8B(3)]. In this terminology $\upsilon_C X=\upsilon X$ and $\upsilon_{C^*} X=\beta X$ 
\end{definition}

It is proved in [\cite{DS}, Theorem 1.5] that for $T\subset \beta X$, $C_T$ is necessarily a $C$-type intermediate ring, in particular therefore for each point $p\in \beta X$, $C_p$ is an intermediate $C$-type ring. The following proposition  is established as Lemma 2.9 in [6].


\begin{lemma}\label{biswajit}
Given any pair of distinct points $p,q$ from $\beta X\setminus \upsilon X$, there exists an $f\in O^p$ such that $f^*(q)=\infty$. Here $O^p=\{g\in C(X)~|~p\in \mbox{int}_{\beta X}\mbox{cl}_{\beta X}Z(g)\}$ and $Z(g)=\{x\in X~|~ g(x)=0\}$ 
\end{lemma}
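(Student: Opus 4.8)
The plan is to manufacture the required $f$ as a product $f = g\cdot\psi$, where $g$ is an "unbounded–at–$q$" function handed to us by the hypothesis $q\notin\upsilon X$, and $\psi\in C^*(X)$ is a bounded cut-off function that is identically $0$ on a $\beta X$-neighbourhood of $p$ and identically $1$ on a $\beta X$-neighbourhood of $q$. The first factor will guarantee the blow-up $f^*(q)=\infty$, while the second will force $f$ to vanish near $p$, which is precisely what membership in $O^p$ demands.

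First I would unpack the definition of $\upsilon X$: since $q\in\beta X\setminus\upsilon X$, there is at least one $g\in C(X)$ whose continuous extension satisfies $g^*(q)=\infty$. Next, using that $\beta X$ is compact Hausdorff, hence normal and regular, together with $p\ne q$, I would separate $p$ and $q$ by disjoint open sets and then shrink them to obtain open $W_p\ni p$ and $V\ni q$ with disjoint closures $\mbox{cl}_{\beta X}W_p\cap\mbox{cl}_{\beta X}V=\emptyset$. Urysohn's lemma then supplies a continuous $u:\beta X\to[0,1]$ with $u\equiv 0$ on $\mbox{cl}_{\beta X}W_p$ and $u\equiv 1$ on $\mbox{cl}_{\beta X}V$; I set $\psi=u|_X\in C^*(X)$ and $f=g\psi\in C(X)$.

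To see $f\in O^p$, note that $\psi\equiv 0$ on $X\cap W_p$, so $X\cap W_p\subseteq Z(f)$; since $X$ is dense in $\beta X$ and $W_p$ is open, $W_p\subseteq\mbox{cl}_{\beta X}(X\cap W_p)\subseteq\mbox{cl}_{\beta X}Z(f)$, whence $p\in W_p\subseteq\mbox{int}_{\beta X}\mbox{cl}_{\beta X}Z(f)$. To see $f^*(q)=\infty$, observe that $\psi\equiv 1$ on $X\cap V$, so $f$ and $g$ agree on $X\cap V$, which is dense in the open set $V$; the continuous extensions $f^*,g^*:\beta X\to\RR\cup\{\infty\}$ therefore coincide throughout $V$, giving $f^*(q)=g^*(q)=\infty$.

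The individual steps are routine, and the point I regard as the crux is the localization principle underlying the last step: $f^*(q)$ depends only on the germ of $f$ at $q$. This is exactly why I insist that $\psi$ be equal to $1$ (not merely bounded away from $0$) on a full $\beta X$-neighbourhood of $q$ — it lets me upgrade the equality $f=g$ on the dense subset $X\cap V$ to the equality $f^*=g^*$ on all of $V$ by continuity into the one-point compactification $\RR\cup\{\infty\}$, and hence conclude $f^*(q)=g^*(q)=\infty$. The parallel use of density of $X\cap W_p$ in $W_p$ is what makes the $O^p$-membership argument go through near $p$.
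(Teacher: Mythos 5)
Your argument is correct and complete. One point worth flagging: the paper does not actually prove this lemma --- it is imported verbatim as Lemma 2.2 of the cited Acharyya--Mitra article --- so there is no in-paper argument to compare against, and your proof supplies something the paper leaves entirely to a reference. The construction $f=g\psi$, with $g^*(q)=\infty$ furnished by $q\notin\upsilon X$ and $\psi$ a Urysohn cut-off ($0$ on a closed $\beta X$-neighbourhood of $p$, $1$ on one of $q$, available because $\beta X$ is compact Hausdorff and hence normal), is the natural one, and both density steps are sound: $W_p\subseteq\mbox{cl}_{\beta X}(X\cap W_p)\subseteq\mbox{cl}_{\beta X}Z(f)$ gives $f\in O^p$, and the agreement set of the continuous maps $f^*,g^*:\beta X\longrightarrow\mathbb{R}\cup\{\infty\}$ is closed (the target being Hausdorff) and contains the dense subset $X\cap V$ of $V$, hence contains $q$. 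A small observation: you never use the hypothesis $p\in\beta X\setminus\upsilon X$; only $p\neq q$ and $q\notin\upsilon X$ enter, so you have in fact proved a marginally stronger statement than the one quoted, which is harmless and is essentially the form in which the paper applies the lemma in Theorem \ref{a}.
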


The next theorem is a consequence of Lemma \ref{biswajit}.
\begin{theorem}\label{a}
If $p\in \beta X\setminus\upsilon X$, then $\upsilon_{C_p}X=\upsilon X\cup\{p\}$.
\end{theorem}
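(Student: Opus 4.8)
The plan is to prove the two inclusions $\upsilon X \cup \{p\} \subseteq \upsilon_{C_p}X$ and $\upsilon_{C_p}X \subseteq \upsilon X \cup \{p\}$ separately. The first is the routine one. Since $C_p \subseteq C(X)$, the defining condition for membership in $\upsilon_{C_p}X$ is weaker than that for $\upsilon X = \upsilon_{C(X)}X$, so the monotonicity of the assignment $A \mapsto \upsilon_A X$ gives $\upsilon X \subseteq \upsilon_{C_p}X$ at once. That $p$ itself lies in $\upsilon_{C_p}X$ is merely a restatement of the definition of $C_p$: every $f \in C_p$ satisfies $f^*(p) \in \mathbb{R}$, which is exactly the condition for $p \in \upsilon_{C_p}X$. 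Together these yield $\upsilon X \cup \{p\} \subseteq \upsilon_{C_p}X$.

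For the reverse inclusion I would argue by contradiction. Suppose $q \in \upsilon_{C_p}X$ but $q \notin \upsilon X \cup \{p\}$. Then $q$ and $p$ are distinct points of $\beta X \setminus \upsilon X$, so Lemma \ref{biswajit} supplies some $f \in O^p$ with $f^*(q) = \infty$. The observation that makes the argument close is that $O^p \subseteq C_p$. Indeed, if $f \in O^p$ then $p \in \mbox{int}_{\beta X}\mbox{cl}_{\beta X}Z(f)$; since $f$ vanishes on $Z(f)$ and $Z(f)$ is dense in $\mbox{cl}_{\beta X}Z(f)$, continuity of the extension $f^*$ forces $f^*$ to vanish on all of $\mbox{cl}_{\beta X}Z(f)$, hence $f^*(p)=0\in\mathbb{R}$ and $f \in C_p$. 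But then $f$ is a member of $C_p$ with $f^*(q) = \infty \notin \mathbb{R}$, contradicting $q \in \upsilon_{C_p}X$. Therefore $q \in \upsilon X \cup \{p\}$, which establishes the second inclusion.

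I expect the only genuine obstacle to be the verification that $O^p \subseteq C_p$, which hinges on correctly transferring the vanishing of $f$ on $Z(f)$ to the vanishing of $f^*$ on the closure $\mbox{cl}_{\beta X}Z(f)$; everything else reduces to unwinding the definitions of $\upsilon_A X$ and $C_p$ and invoking Lemma \ref{biswajit}. It is worth noting that this inclusion $O^p \subseteq C_p$ is precisely the bridge that allows the earlier lemma, phrased in terms of $O^p$, to be brought to bear on the ring $C_p$.
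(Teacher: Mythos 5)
Your proposal is correct and follows essentially the same route as the paper: the inclusion $\upsilon X\cup\{p\}\subseteq \upsilon_{C_p}X$ is dismissed as routine, and the reverse inclusion is obtained by applying Lemma \ref{biswajit} to a point $q\in\beta X\setminus(\upsilon X\cup\{p\})$ together with the observation that $f\in O^p$ forces $f^*(p)=0$, hence $O^p\subseteq C_p$. Your write-up is in fact slightly more careful than the paper's, both in justifying $f^*(p)=0$ via density of $Z(f)$ in $\mathrm{cl}_{\beta X}Z(f)$ and in noting explicitly that the lemma only applies when $q$ lies in $\beta X\setminus\upsilon X$.
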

\begin{proof}
It is trivial that $p\in \upsilon_{C_p}X$. But if $q\neq p$ in $\beta X$, then from Lemma \ref{biswajit} there exists an $f\in O^p$ such that $q\notin \upsilon_{f} X$. But $f\in O^p$ implies that $f^*(p)=0$ and hence $f\in C_p$. Thus $q\notin \upsilon_{C_p} X$. It is trivial that $\upsilon X\subseteq \upsilon_{C_p}X$.
\end{proof}
\begin{corollary}\label{cor}
If $p$ and $q$ are distinct points in $\beta X\setminus \upsilon X$, then $C_p$ and $C_q$ are distinct intermediate rings.
\end{corollary}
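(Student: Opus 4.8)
The plan is to derive the corollary directly from Theorem \ref{a}, exploiting the fact that the assignment $A \mapsto \upsilon_A X$ depends only on $A$ viewed as a subset of $C(X)$. The key structural observation is that for any subring $R \in \sum(X)$ the set $\upsilon_R X = \{r \in \beta X \mid f^*(r) \in \mathbb{R} \text{ for each } f \in R\}$ is computed by a recipe that reads off only the underlying set of $R$. Consequently, equal rings must yield equal such sets, and I would argue by contraposition: it suffices to exhibit $\upsilon_{C_p}X \neq \upsilon_{C_q}X$ in order to conclude $C_p \neq C_q$.

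First I would invoke Theorem \ref{a} at each of the two points, obtaining $\upsilon_{C_p}X = \upsilon X \cup \{p\}$ and $\upsilon_{C_q}X = \upsilon X \cup \{q\}$. Next I would check that these two subsets of $\beta X$ are genuinely distinct. Since $p$ and $q$ both lie in $\beta X \setminus \upsilon X$, neither belongs to $\upsilon X$; combined with $p \neq q$, this forces $p \in \upsilon X \cup \{p\}$ while $p \notin \upsilon X \cup \{q\}$. Hence $\upsilon_{C_p}X \neq \upsilon_{C_q}X$, and the contrapositive argument above delivers $C_p \neq C_q$.

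The main work has already been absorbed into Theorem \ref{a}, which in turn rests on Lemma \ref{biswajit}; once the identification $\upsilon_{C_p}X = \upsilon X \cup \{p\}$ is available, the corollary reduces to the purely set-theoretic separation of $p$ from $q$. I therefore do not expect any real obstacle here: the only point requiring care is ensuring that the singleton added to $\upsilon X$ is not already contained in $\upsilon X$, which is exactly guaranteed by the standing hypothesis $p, q \in \beta X \setminus \upsilon X$.
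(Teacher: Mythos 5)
Your proposal is correct and matches the paper's intent exactly: the statement is presented as an immediate corollary of Theorem \ref{a}, deduced precisely by noting that $C_p = C_q$ would force $\upsilon_{C_p}X = \upsilon_{C_q}X$, contradicting $\upsilon X \cup \{p\} \neq \upsilon X \cup \{q\}$ for distinct $p,q \notin \upsilon X$. No issues.
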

We are now ready to establish the first important technical result of the paper.
\begin{theorem}\label{C_p}
Let $X$ be a first countable noncompact realcompact space and $p,q\in \beta X\setminus\upsilon X$. Then the ring $C_p$ is isomorphic to the ring $C_q$ if and only if there is a homeomorphism from $\beta X$ onto itself, which permutes $p$ and $q$.
\end{theorem}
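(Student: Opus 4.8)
The plan is to convert the ring statement into a topological one and then to single out $p$ and $q$ by a character invariant. The engine is the reduction $C_p\cong C_q$ iff $\upsilon_{C_p}X\cong\upsilon_{C_q}X$ as spaces. Restriction to $X$ identifies $C_p$ with $C(\upsilon_{C_p}X)$, and by Theorem \ref{a} together with realcompactness ($\upsilon X=X$) one has $\upsilon_{C_p}X=X\cup\{p\}$ and $\upsilon_{C_q}X=X\cup\{q\}$; since these spaces are realcompact, Hewitt's theorem that a realcompact space is determined up to homeomorphism by its ring of continuous functions gives $C_p\cong C_q \iff X\cup\{p\}\cong X\cup\{q\}$. (In the language of the paper this is the same as: an isomorphism $\phi:C_p\to C_q$ induces a self--homeomorphism $\tau$ of the common structure space $\beta X$, and since $\phi$ carries real maximal ideals to real maximal ideals, $\tau$ restricts to a homeomorphism $X\cup\{q\}\to X\cup\{p\}$.)

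Write $Z=X\cup\{p\}$ and $Z'=X\cup\{q\}$. The heart of the matter is that, inside $Z$, the adjoined point $p$ is topologically distinguished from every point of $X$, the invariant being the character. First I would verify that each $x\in X$ has a countable neighbourhood base in $Z$: since $Z$ is Hausdorff we may separate $x$ from $p$ and thereby find a $Z$--neighbourhood of $x$ lying entirely in $X$, on which the $Z$--topology coincides with that of $X$, so first countability of $X$ is inherited at $x$. Conversely I claim $p$ has \emph{no} countable neighbourhood base in $Z$; if it did, then since $X$ is dense in $Z$ some sequence $(x_n)$ in $X$ would converge to $p$, and realcompactness must forbid this. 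This last point is the crux and I expect it to be the main obstacle.

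To rule out $x_n\to p$, note that $p\in\beta X\setminus\upsilon X$ provides $f\in C(X)$ with $f^*(p)=\infty$, so $f(x_n)\to\infty$. The device is to compose with an oscillating map: choosing a continuous $\psi:\RR\to[0,1]$ with $\psi(f(x_{n_k}))=0$ and $\psi(f(x_{n_{k+1}}))=1$ along a suitable subsequence (possible because the values $f(x_n)$ run off to $\infty$), the function $h=\psi\circ f$ is bounded, hence $h\in C^*(X)$ and $h^*$ is a genuine real--valued continuous function on $\beta X$; then $h(x_n)=h^*(x_n)\to h^*(p)$ would force convergence, contradicting that $h(x_n)$ oscillates between $0$ and $1$. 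The point worth stressing is that this needs no normality of $X$: the separation is carried out in $\RR$ and pulled back. Consequently $p$ is the unique point of $Z$ of uncountable character, and likewise $q$ in $Z'$.

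It follows that any homeomorphism $g:Z\to Z'$ must send the unique uncountable--character point to the unique uncountable--character point, so $g(p)=q$, and hence $g$ restricts to a self--homeomorphism $g|_X$ of $X$. Extending $g|_X$ by Stone--\v{C}ech functoriality gives a self--homeomorphism $H$ of $\beta X$ carrying $X$ onto $X$, and by uniqueness of continuous extensions from the dense set $X$ it agrees with $g$ on $Z$, so $H(p)=q$. Conversely, a self--homeomorphism of $\beta X$ carrying $X$ onto $X$ and interchanging $p$ and $q$ restricts to a homeomorphism $Z\to Z'$, which returns $C_p\cong C_q$ through the reduction above (or directly via $f\mapsto (f^*\circ H)|_X$, where preservation of $X$ is exactly what makes the assignment land in $C_q$). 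Combining the two directions yields the equivalence; the only delicate ingredient is the realcompactness argument forbidding a sequence of $X$ from converging to a remainder point, upon which the entire separation of $p$ from $X$ depends.
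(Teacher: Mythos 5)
Your forward direction is correct and follows a genuinely different route from the paper's. Where the paper passes through the common structure space $\beta X$ and the fact that a ring isomorphism carries real maximal ideals to real maximal ideals, you identify $C_p$ with $C(X\cup\{p\})$ by restriction and invoke Hewitt's theorem for the realcompact spaces $\upsilon_{C_p}X=X\cup\{p\}$ and $\upsilon_{C_q}X=X\cup\{q\}$, reducing the whole question to whether $X\cup\{p\}$ and $X\cup\{q\}$ are homeomorphic; you then distinguish $p$ inside $X\cup\{p\}$ by its character. Your oscillation argument, showing that no sequence from $X$ converges to a point of $\beta X\setminus\upsilon X$, is a correct and self-contained substitute for the paper's citation of Gillman--Jerison 9.6, and your separation argument for countable character at points of $X$ works in $X\cup\{p\}$ precisely because $X$ is open there. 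What your approach buys is that the topological invariant is localized at the single adjoined point rather than spread over all of $X$ sitting inside $\beta X$.

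The gap is in the converse. The theorem supplies only a homeomorphism $H$ of $\beta X$ with $H(q)=p$, whereas your converse is stated for a homeomorphism ``carrying $X$ onto $X$'' and exchanging $p$ and $q$ --- and, as you yourself observe, $H(X)=X$ is exactly what makes $f\mapsto (f^*\circ H)|_X$ land in $C_q$ (otherwise $f^*\circ H$ could take the value $\infty$ at some point of $X$). So you must prove that, under the standing hypotheses, \emph{every} self-homeomorphism of $\beta X$ preserves $X$. Your character invariant inside $Z=X\cup\{p\}$ does not give this, and your separation trick for points of $X$ fails in $\beta X$ because $X$ need not be open in $\beta X$. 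The missing ingredient is the paper's opening observation: if $\{U_n\}$ is a countable base at $x\in X$, then $\{\mbox{cl}_{\beta X}U_n\}$ is a countable neighbourhood base at $x$ in $\beta X$, so $\beta X$ is first countable at each point of $X$; combined with your oscillation argument (which, since $\upsilon X=X$, shows that no point of $\beta X\setminus X$ has countable character in $\beta X$), this exhibits $X$ as precisely the set of points of countable character of $\beta X$, whence $H(X)=X$ for any self-homeomorphism $H$. With that line added, your proof is complete.
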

\begin{proof}
Each point $x$ in $X$ has a countable local base $\{U_n\}_{n=1}^{\infty}$ in the space $X$. It follows that $\{\mbox{cl}_{\beta X}U_n~|~n\in \mathbb{N}\}$ is a countable local base about the same point in the space $\beta X$ also. Thus $\beta X$ is first countable at each point on $X$. On the other hand no point of $\beta X\setminus X$ is a $G_{\delta}$-point of $\beta X$ [\cite{lm}, \S Corollary 9.6]. Consequently any homeomorphism $\phi:\beta X\longrightarrow \beta X$ permutes points of $X$, i.e., $\phi(X)=X$.

First assume that $C_p$ is isomorphic to $C_q$ under a map $\psi:C_p\longrightarrow C_q$. Then $\psi$ induces a homeomorphism $\Phi$ between their common structure space $\beta X$ in the most obvious manner: $\Phi: \mathfrak{M} \rightarrow \psi(\mathfrak{M})$. As every isomorphism between two rings of real valued continuous functions takes real maximal ideals to real maximal ideals, it follows that the restriction map $\Phi|_{\upsilon_{C_p}X}~:~ \upsilon_{C_p}X\longrightarrow\upsilon_{C_q}X$ becomes a homeomorphism onto $\upsilon_{C_q}X$. Since each homeomorphism on $\beta X$ permutes the points of $X$ as observed above it follows that $\Phi(\upsilon_{C_p}X\setminus X)=\upsilon_{C_q}X\setminus X$. This implies on using Theorem \ref{a} that $\Phi((\upsilon X\setminus X)\cup\{p\})=(\upsilon X\setminus X)\cup\{q\}$. Since $X$ is realcompact, therefore $\upsilon X=X$, and hence $\Phi(p)=q$. 

Conversely let there exist a homeomorphism $\Phi:\beta X\longrightarrow\beta X$ with $\Phi(q)=p$. Choose $f\in C_p$. Then $\Psi(f)=(f^*\circ\Phi)|_X\in C(X)$ and we notice that $(\Psi(f))^*=f^*\circ\Phi$ because these two continuous functions agree on $X$ which is dense in $\beta X$. We further check that $(\Psi(f))^*(q)=(f^*\circ\Phi)(q)=f^*(p)\in\mathbb{R}$. This implies that $\Psi(f)\in C_q$. Thus a map $\Psi~:~C_p\longrightarrow C_q$ is defined in the process. On using the denseness of $X$ in $\beta X$, one can easily verify that $\Psi$ is a ring homomorphism and  $\mbox{Ker}\Psi=\{f\in C_p~:~(f^*\circ\Phi)|_X=0\}=\{f\in C_p~:~f\circ\Phi|_X=0\}=\{f\in C_p~:~f=0\}=\{0\}$. Furthermore for $g\in C_q$, $(g^*\circ\Phi^{-1})(p)=g^{*}(q)$ is a real number. We also note that $g^*\circ\Phi^{-1}=(g\circ\Phi^{-1}|_X)^*$, because of the denseness of $X$ in $\beta X$. Thus $(g^*\circ\Phi^{-1})|_X\in C_p$. We see that $\Psi((g^*\circ\Phi^{-1})|_X)=(g^*\circ\Phi^{-1}\circ\Phi)|_X=g^*|_X=g$. Thus $\Psi:C_p\longrightarrow C_q$ turns out to be an isomorphism.
\end{proof}

The following proposition hints at the possibility of abundance of nonisomorphic intermediate rings.

\begin{theorem}\label{flood}
Let $X$ be first countable noncompact and realcompact. If $p$ is a $P$-point of $\beta X\setminus X$ and $q$ a non $P$-point of $\beta X\setminus X$, then $C_p$ is not isomorphic to $C_q$. ($P$-points of a space $X$ are those points whose neighbourhood filters are closed under countable intersections).
\end{theorem}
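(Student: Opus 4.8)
The plan is to argue by contraposition, converting the ring-theoretic statement into a topological one via Theorem \ref{C_p}. Suppose, toward a contradiction, that $C_p$ and $C_q$ are isomorphic as rings. By Theorem \ref{C_p} there would then exist a homeomorphism $\phi : \beta X \longrightarrow \beta X$ of $\beta X$ onto itself exchanging $p$ and $q$; in particular $\phi(p) = q$. The whole argument will hinge on showing that such a $\phi$ cannot exist, because it would be forced to carry the $P$-point $p$ onto the non $P$-point $q$ while preserving the $P$-point property.

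The first step is to observe, exactly as in the proof of Theorem \ref{C_p}, that the first countability of $X$ together with the fact that no point of $\beta X \setminus X$ is a $G_\delta$-point of $\beta X$ [Theorem 9.6, \cite{lm}] forces every self-homeomorphism of $\beta X$ to satisfy $\phi(X) = X$. Consequently $\phi$ restricts to a homeomorphism of the subspace $\beta X \setminus X$ onto itself.

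The second step is to note that being a $P$-point of the space $\beta X \setminus X$ is a purely topological property of the point within that subspace: it asserts that the neighbourhood filter of the point in $\beta X \setminus X$ is closed under countable intersections, a condition invariant under any homeomorphism of $\beta X \setminus X$. Since $\phi$ restricts to such a homeomorphism and $\phi(p) = q$, the image $q$ would necessarily be a $P$-point of $\beta X \setminus X$ whenever $p$ is. This directly contradicts the hypothesis that $p$ is a $P$-point while $q$ is not, and completes the argument.

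I expect the proof to be short, with no serious obstacle, because the heavy lifting has already been carried out in Theorem \ref{C_p}, which reduces the isomorphism of $C_p$ and $C_q$ to the existence of a self-homeomorphism of $\beta X$ swapping $p$ and $q$. The only point needing care is the passage from a self-homeomorphism of $\beta X$ to one of the remainder $\beta X \setminus X$, which is exactly where the first countability hypothesis (through the $G_\delta$ obstruction) is used; note also that we need only $\phi(p)=q$, not the full exchange, to reach the contradiction. Once this restriction to the remainder is in hand, the topological invariance of the $P$-point property finishes the proof at once.
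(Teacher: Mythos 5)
Your proof is correct and follows essentially the same route as the paper's: reduce to the existence of a self-homeomorphism of $\beta X$ carrying $p$ to $q$ via Theorem \ref{C_p}, then invoke the topological invariance of the $P$-point property. The paper's proof is a single sentence, and you usefully make explicit the step it leaves tacit, namely that such a homeomorphism restricts to a self-homeomorphism of the remainder $\beta X\setminus X$ (where the $P$-point hypothesis actually lives), via the $G_\delta$-point argument already used in the proof of Theorem \ref{C_p}.
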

\begin{proof}
Follows immediate from Theorem \ref{C_p}, because the property that a point in a topological space is a $P$-point is a topological invariant.
\end{proof}
\begin{remark}\label{iso cp}
The above Theorem \ref{flood} tells that if $C_p$ is isomorphic to $C_q$ and $p$ is a P-point then $q$ is also a P-point. But it is not known to us whether the hypothesis that $p$ and $q$ are P-points leads to the conclusion that $C_p$ and $C_q$ are isomorphic.
\end{remark}

To set further insight into the possibility of existense of nonisomorphic intermediate rings we need a classification of points of $\beta \mathbb{N}\setminus \mathbb{N}$. By using the extension property of Stone-\v{C}ech compactification it follows that every permutation $\tau :\mathbb{N}\rightarrow \mathbb{N}$ has an extension to an automorphism $\tau^*:\beta \mathbb{N}\rightarrow\beta\mathbb{N}$. On the other hand since each point of $\mathbb{N}$ is isolated in $\beta\mathbb{N}$, while no point of $\beta \mathbb{N}\setminus\mathbb{N}$ is isolated in $\beta\mathbb{N}$, we get that if $\psi:\beta\mathbb{N}\rightarrow \beta \mathbb{N}$ is a homeomorphism, then $\Psi(\mathbb{N})=\mathbb{N}$. This means that $\Psi|_{\mathbb{N}}$ is a permutation of $\mathbb{N}$ and $(\Psi|_{\mathbb{N}})^*=\Psi$. thus the entire collection of automorphism on $\beta \mathbb{N}$ is given by the family $\{\tau^* : \tau ~\mbox{is a permutation of}~\mathbb{N}\}\equiv \{\tau^* : \tau \in Per(\mathbb{N})\}$, writing $Per(\mathbb{N})$ for the set of all permutation on $\mathbb{N}$.

\begin{definition} \label{equivalent}
An equivalence relation $"\rightarrow"$ on $\beta \mathbb{N}\setminus \mathbb{N}$ is defined as follows : for $p,q\in \beta\mathbb{N}\setminus \mathbb{N}$, $p\rightarrow p$ if there exists $\tau \in Per(\mathbb{N})$ such that $q=\tau^*(p)$.
\end{definition}
Let $\{E_{\alpha}:\alpha\in \Lambda\}$ be the set of the corresponding disjoint classes, thus $(\beta\mathbb{N}\setminus \mathbb{N})/\rightarrow =\{E_\alpha : \alpha\in \Lambda\}$.

Before proceeding further, we recall from  \cite{RC}, the notion orbit of a point  in $\beta \mathbb{N}\setminus \mathbb{N}$. For any point $p\in \beta \mathbb{N}\setminus \mathbb{N}$, the set $Orbit(p)=\{\delta(p):\delta\in Aut(\beta \mathbb{N}\}$ is called the orbit of $p$, here $Aut(\beta \mathbb{N})$, stands for the set of all homeomorphism from $\beta \mathbb{N}$ onto itself.

We have observed above that an automorphism on $\beta \mathbb{N}$ sends point of $\beta \mathbb{N}\setminus \mathbb{N}$ to points in $\beta \mathbb{N}\setminus \mathbb{N}$. Furthermore there are just $c$-many permutation on $\mathbb{N}$ and consequently there are precisely $c$-many automorphism on $\beta \mathbb{N}$. Hence for any point $p\in \beta \mathbb{N}\setminus \mathbb{N}$, $Orbit(p)\subseteq \beta \mathbb{N}\setminus \mathbb{N}$ and $|Orbit(p)|\leq c$. But there exists a family $\{A_{\alpha}\}_{\alpha}$ of $c$-many pairwise disjoint nonempty open sets in the space $\beta \mathbb{N}\setminus \mathbb{N}$ [\cite{lm}, \S 6Q]. Since $Orbit(p)$ is a dense subset of $\beta \mathbb{N}\setminus \mathbb{N}$, a fact reproduced in [\cite{RC}, Corollary 3.20] it therefore follows that $|Orbit(p)|\geq c$. Thus $|Orbit(p)|=c$. for any point $p$ in $\beta \mathbb{N}\setminus \mathbb{N}$. On recalling the quotient set $(\beta\mathbb{N}\setminus \mathbb{N})/\rightarrow =\{E_\alpha : \alpha\in \Lambda\}$ introduced after Definition \ref{equivalent}, the following result is therefore evident:
\begin{remark}
For $\alpha \in \Lambda$ and any point $p$ on $E_{\alpha}$, $Orbit(p)=E_{\alpha}$ and therefore $|E_{\alpha}|=c$. Consequently therefore, since $|\beta\mathbb{N}\setminus \mathbb{N}|=2^c$, the cardinal number of the index set $\Lambda$ is $2^c$, this means that there are $2^c$ many different orbits of points of $ \beta\mathbb{N}\setminus \mathbb{N}$. the following theorem manifests that there do exist uncountably many isomorphic $c$-type intermediate subrings of $C(\mathbb{N})$
\end{remark}
\begin{theorem} \label{Ealpha}
Let $E_{\alpha}, \alpha \in \Lambda$ be chosen arbitrarily and choose any pair of distinct points $p,q$ from $E_{\alpha}$. Then $C_p$ is isomorphic to $C_q$.
\end{theorem}
\begin{proof}
Since $p,q\in E_{\alpha}$, it follows that $p\rightarrow q$, this means that there is a homeomorphism $\psi:\beta \mathbb{N}\rightarrow \beta\mathbb{N}$ with $\psi(p)=q$. We now apply Theorem \ref{a} to infer that $C_p$ is isomorphic to $C_q$. 
\end{proof}

Now Construct a set $S$ by picking up exactly one member $p_{\alpha}$ from each set $E_{\alpha}$, for $\alpha \in \Lambda$. We write $S=\{p_{\alpha}|\alpha \in \Lambda\}$. Then $S$ contains $2^c$ many elements. We are now ready to enunciate the last main proposition of this paper.

\begin{theorem}\label{conclusion}
Suppose $X$ is a first countable real compact space which is not pseudocompact. Then there exists a family containing no fewer than $2^c$ many members of pairwise nonisomorphic $C$-type intermediate rings lying between $C^*(X)$ and $C(X)$. 
\end{theorem}
\begin{proof}
Since $X$ is not pseudocompact there exists a homeomorphic copy of $\mathbb{N}$, which is already $C$-embedded in $X$ [\cite{lm}, 1.21]. It is clear that $cl_{\beta X}\mathbb{N}$ is a Hausdorff compactification of $\mathbb{N}$ in which $\mathbb{N}$ is $C^*$-embedded. Therefore we can write $cl_{\beta X}\mathbb{N} = \beta \mathbb{N}$. But $X$ is $C$-embedded in $\upsilon X$ implies that $\mathbb{N}$ is $C$-embedded in $\upsilon X$. As $C$-embedded countable subsets of a completely regular space are closed subsets of it [\cite{lm}, \S 3B3], it follows that $\mathbb{N}$ is closed in $\upsilon X$ and consequently $\mathbb{N}$ is closed in $X$ become $X$ is realcompact and furthermore $\beta\mathbb{N}\setminus\mathbb{N}\subseteq \beta X\setminus X$. Now $p_{\alpha}$ and $p_{\beta}$ are two distinct points chosen from the set $S$, defined above,just preceding the statement of the present theorem, then no homeomorphism on $\beta \mathbb{N}$ onto itself can permute $p_{\alpha}$ and $p_{\beta}$. We claim that there does not exist any homeomorphism from $\beta X$ onto itself, which can permute $p_{\alpha}$ and $p_{\beta}$ and hence by Theorem \ref{C_p}, the ring $C_{p_{\alpha}}=\{f\in C(X): f^*(p_{\alpha})\in \mathbb{R}\}$ is not isomorphic to the ring $C_{p_{\beta}}=\{f\in C(X): f^*(p_{\beta})\in \mathbb{R}\}$ and that finishes our theorem. We argue by contradiction and assume that there is a homeomorphism $\psi:\beta X \rightarrow \beta X$ such that $\psi(p_{\alpha})=p_{\beta}$. Since $X$ is first countable, We have observed in the proof of Theorem \ref{C_p} that $\psi|_{X} :X\rightarrow X$ is a homeomorphism. As $\mathbb{N}$ is $C$-embedded in $X$, it follows that $\psi(\mathbb{N})$, a copy of $\mathbb{N}$ is $c$-embedded in $X$. On the otherhand the homeomorphic nature of $\psi$ on $\beta X$ implies that $\psi(cl_{\beta X}\mathbb{N})=cl_{\beta X}\psi(\mathbb{N})$, in otherwords, abusing notations, we can write $\psi(\beta \mathbb{N})=\beta\mathbb{N}$ and consequently $\psi(\beta\mathbb{N}\setminus\mathbb{N})=\beta\mathbb{N}\setminus\mathbb{N}$. Thus $\psi|_{\beta\mathbb{N}\setminus\mathbb{N}}:\beta\mathbb{N}\setminus\mathbb{N}\rightarrow \beta\mathbb{N}\setminus\mathbb{N}$ is a homeomorphism, which by our initioal assumption made above exchanges $p_{\alpha}$ and $p_{\beta}$. Hence $p_{\alpha}\rightarrow p_{\beta}$ (vide the equivalence relation Definition \ref{equivalent}) and therefore $p_{\alpha}=p_{\beta}$, because of the nature of the set $S$ constructed preceeding this theorem. This is a contradiction.
\end{proof}
\begin{remark}
In the proof of Theorem \ref{C_p}, the first countability on $X$ is exploited only to prove that any homeomorphism on $\beta X$ onto $\beta X$ exchanges the points of $X$. Therefore if the hypothesis of Theorem \ref{C_p} is weakened in the manner that, $X$ is a noncompact realcompact space such that any homeomorphism on $\beta X$ onto $\beta X$ permutes the points of $X$, then the conclusion of Theorem \ref{C_p} and thereafter the conclusion of Theorem \ref{flood}, Remark \ref{iso cp} and Theorem \ref{Ealpha} are still valid. Indeed it is not difficult to produce an example of a space $X$ with the above mentioned weekend version of the hypothesis of Theorem \ref{C_p}, by dropong the first countability condition. For that purpose let $X$ be a countable non compact Tychonoff space such that each point of $X$ is the limit of a converging sequence of distinct terms. Then $X\varsubsetneq \beta X$ and for any point $p\in \beta X\setminus X$, there does not exist any convergent sequence $\{x_{1},x_{2},...\}$ of distinct terms in $\beta X$ which can converge to $p$. We establish this claim by contradiction and  assume that there exist such a convergent sequence $\{x_{1},x_{2},...\}$ in $\beta X$ converging to $p$ with $p\neq x_n$ and for each $n\in \mathbb{N}$ $D$ is the range of this sequence, then $D\cup X$ becomes a regular Lindeloff subspace of $\beta X$ and therefore normal and $D$ becomes a closed discrete subspace of $D\cup X$. By Tietze's extension Theorem $D$ is $C^*$-embedded in $D\cup X$ and consequently $D$ becomes $C^*$-embedded in $\beta X$. Now the subsets $\{x_1,x_3,x_5,....\}$ and $\{x_2,x_4,x_6,...\}$ of $D$ are completely separated by a continuous function in $C(D)$, because of discreteness of $D$, but the closure of these two sets in $\beta X$ are not disjoint because they meet at $p$. By applying Urysohn's extension theorem [cite, Teorem 1, 17], we arrive at a contradiction. Thus no point in $\beta X\setminus X$ is the limit of a converging sequence of distinct terms in $\beta X$. Since by our initial assumption every point of $X$ is the limit of a sequence of distinct terms, it follows for any homeomorphism $T:\beta X\rightarrow \beta X$ onto $\beta X$, $T(X)=X$. Thus it is proved that any countable noncompact space $X$ even if it is not first countable but, each point of which is the limit of a sequence of distinct terms can be a candidate for the space $X$ in the statement of Theorem \ref{C_p}.
\end{remark}

\textbf{Acknowledgement:} The authors are grateful to Professor Alan Dow, who is kind enough to provide us the necessary example in Remark 2.13.

\end{document}